\NeedsTeXFormat{LaTeX2e}
\documentclass[12pt,a4paper]{article}      

\usepackage{graphicx}
\usepackage{enumerate}
\usepackage{amssymb, amsmath, amsthm}
\usepackage[T1]{fontenc}
\usepackage{epsfig}
\usepackage{bbm}
\usepackage{color}
\usepackage{mathrsfs}
\usepackage{eurosym}
\usepackage{caption}
\usepackage{float}
\usepackage[left=2.7cm, top=2.7cm,bottom=2.7cm,right=2.7cm]{geometry}

\newtheorem{satz}{Theorem}[section]
\newtheorem{lem}[satz]{Lemma}

\newtheorem{cor}[satz]{Corollary}

\newtheorem{anm}[satz]{Remark}

\usepackage[T1]{fontenc}
\usepackage{lmodern}

\newcommand{\N}{\ensuremath{{\mathbb N}}}

\newcommand{\R}{\ensuremath{{\mathbb R}}}
\newcommand{\e}{\varepsilon}
\DeclareMathOperator{\AveP}{\underset{\pi}{Ave}}
\DeclareMathOperator{\AvePP}{\underset{\pi,\sigma}{Ave}}
\DeclareMathOperator{\AvePPP}{\underset{\pi,\sigma,\eta}{Ave}}
\DeclareMathOperator{\AvePs}{\underset{\sigma}{Ave}}
\DeclareMathOperator{\AveV}{\underset{\e}{Ave}}

\newcommand{\abs}[1]{\left\lvert#1 \right\rvert}
\newcommand{\norm}[1]{\left \lVert#1 \right\rVert}

\DeclareMathOperator{\dbm}{d_{BM}}
\DeclareMathOperator{\dint}{d}

\begin{document}

\title{Embeddings of Orlicz-Lorentz spaces into $L_1$}
\author{Joscha Prochno 
}
\date{\today}
\maketitle

\begin{abstract} 
In this article, we show that Orlicz-Lorentz spaces $\ell^n_{M,a}$, $n\in\N$ with Orlicz function $M$ and weight sequence $a$ are uniformly isomorphic to subspaces of $L_1$ if the norm $\|\cdot\|_{M,a}$ satisfies certain Hardy-type inequalities. This includes the embedding of some Lorentz spaces $\dint^n(a,p)$. Our approach is based on combinatorial averaging techniques and we prove a new result of independent interest that relates suitable averages with Orlicz-Lorentz norms.
\end{abstract}


\section{Introduction}\label{intro}

The classical Banach space $L_1$ has a very rich structure and plays an important role in many areas of modern mathematics such as Asymptotic Geometric Analysis, Functional Analysis, Harmonic Analysis, or Probability Theory. It is therefore natural to investigate and seek to understand the geometric structure of this space. This can be done, for instance, by studying its finite-dimensional subspaces. The methods used typically share a lively interplay of geometric, analytic, combinatorial, and probabilistic ideas.
While the approach via finite-dimensional subspaces is a purely `local' one, it does however bear significant information on the `global' structure of the space $L_1$. We refer the reader to the pioneering work \cite{LP1968} of Lindenstrauss and Pe\l czy\'nski in which they made Grothendieck's R\'esum\'e \cite{G1953} accessible to a broader community and launched the Local Theory of Banach spaces.

It comes as no surprise that the influential work of Lindenstrauss and Pe\l czy\'nski triggered an extensive research activity around the local structure of Banach spaces and $L_1$ in particular, and that various deep results and powerful methods have been obtained and developed. For instance, Kwapie\'n and Sch\"utt proved that all spaces whose norms are averages of $2$-concave Orlicz norms embed into $L_1$, which was a local version of a result previously obtained by Bretagnolle and Dacunha-Castelle \cite{BDC} for the infinite-dimensional setting. This characterization gives, in a certain sense, a complete picture of which spaces with a symmetric basis embed into $L_1$, but as can be seen in the case of Lorentz spaces \cite{key-Sch2} it is far from easy to apply. Therefore, it is desirable to obtain conditions which may be `easily' verified. Let us give some references most relevant to this article. It was shown by Sch\"utt in \cite[Corollary 3]{Sch1995}, using combinatorial tools partly developed in \cite{key-K-S1} (see also \cite{LPP2015}), that every Orlicz space with a $2$-concave Orlicz function embeds into $L_1$.  A similar result was later obtained by Lechner, Passenbrunner, and Prochno showing that a $2$-concave Orlicz space $\ell_M^n$ embeds into $\ell_1^{cn^3}$ with absolute constant $c\in(0,\infty)$ \cite{LPP2017} and the embedding for $2$-concave Musielak-Orlicz spaces was proved in \cite[Corollary 1.3]{key-Pro3}. In \cite{key-Sch2}, also using a combinatorial approach, the Lorentz spaces isomorphic to a subspace of $L_1$ were characterized by Sch\"utt. In the works \cite{APP2016, PS2012, S2013} embeddings of certain matrix spaces into $L_1$ were obtained.

It is natural to ask whether similar embedding results or characterizations can be shown for more general classes of Banach sequence spaces (with a symmetric basis), keeping in mind that already the cases of Orlicz and Lorentz spaces required sophisticated ideas and technical finesse. This paper is a contribution towards this goal and we shall show that certain Orlicz-Lorentz spaces, hybrids combining both Orlicz and Lorentz spaces, are uniformly isomorphic to subspaces of $L_1$. The approach we choose is based on combinatorial methods, which are of independent interest. As powerful as these averaging techniques are, as technical they can typically be, and only few experts are really familiar with them.

In order to present the main result of this article, let us briefly introduce some notation. For details, we refer the reader to Section \ref{sec:prelim} or the standard literature on Banach space geometry \cite{LT1977, LT1979, P1989, TJ1989}. We shall denote by $\dbm$ the Banach-Mazur distance, which is a measure for the geometric similarity or difference of two isomorphic spaces. We define an Orlicz-Lorentz space $\ell^n_{M,a}$ with non-increasing weight sequence $a_1\geq a_2 \geq \ldots \geq a_n \geq 0$ and Orlicz function $M$ to be $\R^n$ with the norm
  \[
      \norm{x}_{M,a} = \inf \left\{ \rho > 0 : \sum_{i=1}^n M\left(\frac{a_i x_i^*}{\rho}\right) \leq 1 \right \},
  \]
where $(x_i^*)_{i=1}^n$ is the non-increasing rearrangement of the coordinates of the vector $x$. The standard unit vector basis is a symmetric basis for this space. We shall write $\mathfrak{S}_n$ for the symmetric group of all permutations of the set $\{1,\dots,n\}$. Lastly, we denote by $L_1^{n!^32^{2n}}$ the space 
\begin{align*}
L_1^{n!^32^{2n}}
     & =  \Big\{ \big(x(\pi,\sigma,\varepsilon,\delta)\big)_{\pi,\sigma,\varepsilon,\delta}\in\R^{n!^32^{2n}} \,:\,
    \e,\delta\in\{-1,1\}^n,
    \pi,\sigma\in\mathfrak{S}_n \Big\}
\end{align*}
with the norm
  $$
    \norm{x}_1 = \frac{1}{n!^32^{2n}} \sum_{\pi,\sigma,\e,\delta} \abs{x(\pi,\sigma,\e,\delta)},\qquad x = \left(x(\pi,\sigma,\varepsilon,\delta)\right)_{\pi,\sigma,\varepsilon,\delta}\in\R^{n!^32^{2n}}.
  $$

The following theorem shows that for any $p\in(1,2)$, every Orlicz function $M$ for which $M(t)/t^{p-\epsilon}$ is decreasing ($\epsilon>0$), and all weight sequences $a$ for which the Orlicz-Lorentz norm satisfies two Hardy-type inequalities, the corresponding sequence of Orlicz-Lorentz spaces $\ell_{M,a}^n$, $n\in\N$ embeds uniformly into $L_1$.

\begin{satz}\label{thm: embedding r-convex}
  Let $n\in\N$ and $1<p<2$. Let $a_1\geq \ldots \geq a_n>0$ and for some $\epsilon\in(0,p-1)$ let $M$ be an Orlicz function such $M(t)/t^{p-\epsilon}$ is non-increasing. Assume that, for all $x\in\R^n$, the Orlicz-Lorentz norm satisfies
    \begin{equation}\label{Hardy 1}
      \bigg\| \bigg(\Big(\frac{1}{k}\sum_{i=1}^k|x_i^*|^p \Big)^{1/p}\bigg)_{k=1}^n \bigg\|_{M,a} \leq C_1 \,\|x\|_{M,a}
    \end{equation}
    and
    \begin{equation}\label{Hardy 2}
      \bigg\| \bigg(\Big(\frac{1}{k}\sum_{i=k+1}^n|x_i^*|^2 \Big)^{1/2}\bigg)_{k=1}^n \bigg\|_{M,a} \leq C_2\, \|x\|_{M,a},  
    \end{equation}
    where $C_1,C_2\in(0,\infty)$ are absolute constants. Then there is a subspace $Y_n$ of $L_1^{n!^32^{2n}}$ with $\dim{Y_n}=n$ such that $\dbm(\ell^n_{M,a},Y_n) \leq D$, where $D\in(0,\infty)$ is a constant depending only on $p$.  
\end{satz}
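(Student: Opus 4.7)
The plan is to construct an explicit linear embedding $T:\ell^n_{M,a}\to L_1^{n!^32^{2n}}$ and to verify a two-sided inequality $c\|x\|_{M,a}\le\|Tx\|_1\le C\|x\|_{M,a}$ with constants depending only on $p$; this will produce $Y_n=T(\R^n)$ with the required Banach--Mazur bound. Guided by the structure of the target (two explicit permutations $\pi,\sigma$ and two sign sequences $\e,\delta$, together with a third permutation $\eta$ implicit from the $n!^3$ factor and the macro $\operatorname{Ave}_{\pi,\sigma,\eta}$ declared in the preamble), I would look for $T$ of the schematic form
$$
(Tx)(\pi,\sigma,\eta,\e,\delta)=\sum_{i=1}^n\e_i\,a_{\sigma(i)}\,x_{\pi(i)}\,h_i(\eta,\delta),
$$
where $h_i(\eta,\delta)$ is a suitably chosen combinatorial weight (a signed indicator of a random initial segment is the natural first guess) designed so that averaging it out decouples the contribution of the top block of $x^*$ from its tail.

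For the upper bound $\|Tx\|_1\le C\|x\|_{M,a}$, I would average first over the signs $\e,\delta$ and invoke Khintchine's inequality to convert the $L_1$ average into an $\ell_2$-type expression. The monotonicity of $M(t)/t^{p-\e}$ is what legitimises this passage, since it forces $M$ to behave like a $p$-convex function and hence keeps the Rademacher averages comparable with the Orlicz-Lorentz norm. Averaging subsequently over $\pi,\sigma,\eta$, the resulting deterministic quantity should be comparable with $\|x\|_{M,a}$ via the averaging lemma advertised in the abstract, which plays the role of the Kwapie\'n--Sch\"utt formula in this two-parameter setting.

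For the lower bound $\|x\|_{M,a}\le C'\|Tx\|_1$ the strategy is to read the Orlicz-Lorentz norm off from $\|Tx\|_1$ by restricting to a favourable ``layer'' of indices. Splitting $x^*$ at a carefully chosen cut-off $k$, the contribution of the largest $k$ coordinates produces, after the relevant averages, a quantity of order $\bigl(\tfrac1k\sum_{i=1}^k|x_i^*|^p\bigr)^{1/p}$, controlled by \eqref{Hardy 1}; the contribution of the tail, arising from the Gaussian-like behaviour of Rademacher sums, is of order $\bigl(\tfrac1k\sum_{i>k}|x_i^*|^2\bigr)^{1/2}$ and is controlled by \eqref{Hardy 2}. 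Optimising over $k$, in the spirit of Sch\"utt's treatment of Lorentz spaces in \cite{key-Sch2}, then delivers the reverse estimate up to a $p$-dependent constant.

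The main obstacle will be the averaging lemma itself: one has to establish a two-sided equivalence between $\|x\|_{M,a}$ and an explicit combinatorial average in which both the maximum-type and the $\ell_2$-tail terms are visible, so that \eqref{Hardy 1} and \eqref{Hardy 2} can be inserted verbatim in the lower-bound argument. The monotonicity condition on $M(t)/t^{p-\e}$ must be used both inside that lemma and in the Khintchine step of the upper bound. A secondary but nontrivial difficulty is absolute control of constants: any intermediate inequality whose constant depends on the growth of $M$ at zero, on the decay of the weight sequence $a$, or on the dimension $n$ has to be avoided or neutralised, since the conclusion asserts that $D$ depends only on $p$.
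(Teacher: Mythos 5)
Your high-level strategy — construct a linear map into $L_1^{n!^32^{2n}}$ indexed by three permutations and two sign vectors, use Khintchine to pass between $L_1$- and $\ell_2$-averages, and prove a two-sided estimate between the resulting average and $\norm{x}_{M,a}$ — is the right one, and the anticipated difficulty (the averaging lemma) is the heart of the matter. However, several of your attributions are reversed or misplaced, and the map is structurally different from what is actually needed.

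First, the map. The embedding in the paper is
\[
\Psi_n(x)(\pi,\sigma,\eta,\e,\delta)=\sum_{i,k=1}^n x_i\, c^k_{\pi(i)}\, d_{\sigma(k)}\, z_{\eta(k)}\,\e_i\,\delta_k,
\]
a genuine double sum over $(i,k)$ in which the two sign vectors $\e,\delta$ and the two permutations $\pi,\sigma$ act on distinct index sets; the third permutation $\eta$ acts on an auxiliary vector $z=\big((n/i)^{1/p}\big)_i$ that generates an $\ell_p$-norm. Your proposed $(Tx)(\pi,\sigma,\eta,\e,\delta)=\sum_i\e_i a_{\sigma(i)} x_{\pi(i)} h_i(\eta,\delta)$ is a single sum, so $\delta$ has no independent index set to average over — Khintchine for the two sign families cannot be applied separately, and one loses the $\ell_2$-in-$i$, $\ell_p$-in-$k$ split that the combinatorial lemma exploits via Jensen. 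The block vectors $c^k=(a_k,\dots,a_k,0,\dots,0)$ with $n/k$ nonzero entries and the auxiliary vectors $d$ (chosen to match $(M^*)^{-1}$) and $z$ are precisely what encode the weight sequence $a$, the Orlicz function $M$, and the convexification parameter $p$.

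Second, and more substantially, the roles of the hypotheses are reversed. The Hardy-type inequalities \eqref{Hardy 1} and \eqref{Hardy 2} are used to prove the \emph{upper} bound, i.e. that the triple permutation-average is $\lesssim_p\norm{x}_{M_d,a}$: after Jensen and the averaging formulas one is left with $\big\|\big(a_k(\tfrac1k\sum_{i\le k}x_i^{*p})^{1/p}\big)_k\big\|_{M_d}+\big\|\big(a_k(\tfrac1k\sum_{i>k}x_i^{*2})^{1/2}\big)_k\big\|_{M_d}$, and \eqref{Hardy 1}, \eqref{Hardy 2} absorb these back into $\norm{x}_{M,a}$. The \emph{lower} bound needs none of this: it follows from the triangle inequality, the averaging lemmas, and the elementary fact that $\sum_{i\le k}x_i^*\ge k x_k^*$. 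Your proposal puts the Hardy inequalities into the lower bound, where they point in the wrong direction: they estimate something \emph{from above} by $\norm{x}_{M,a}$, which cannot show that $\norm{Tx}_1$ dominates $\norm{x}_{M,a}$.

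Third, the monotonicity of $M(t)/t^{p-\epsilon}$ plays no role in "legitimising" Khintchine's inequality, which is unconditional. Its actual job is to prove that the Orlicz function $M_d$ produced by the combinatorial lemma is equivalent to the given $M$. Concretely, choosing $d$ so that $(M^*)^{-1}(\ell/n)=\tfrac1n\sum_{i\le\ell}d_i$, one must also control the secondary term $(\ell/n)^{1/p^*}\big(\tfrac1n\sum_{i>\ell}|d_i|^p\big)^{1/p}$; the decrease of $t\mapsto t/(M^{-1}(t))^{p-\epsilon}$ gives the required comparison $\sum_{i>\ell}|d_i|^p\lesssim_p \ell/(M^{-1}(\ell/n))^p$. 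Without this step the embedding yields the wrong Orlicz-Lorentz space. Finally, the lower bound does not involve any optimisation over a cut-off $k$ in the style of Sch\"utt's Lorentz-space argument; it is a direct computation. As written, your plan does not close the lower bound and mislocates both hypotheses, so it is a partial sketch rather than a proof.
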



The parameter $p$ appears in Theorem \ref{thm: embedding r-convex}, because in our proof we need to pass through an $\ell_p$ space to embed into $L_1$ and this seems to be a technical matter. It can be seen as a kind of convexification argument. A condition similar to $M(t)/t^{p-\epsilon}$ is decreasing has already been used in \cite{PS2012}. This condidtion is satisfied, for instance, if $M$ is an $r$-concave Orlicz function with $r<p$, i.e., if $t\mapsto M(t^{1/r})$ is a concave function. Roughly speaking this assumption separates the Orlicz function $M$ and the Orlicz function $t\mapsto t^p$. We refer to the discussion in \cite{PS2012} where it is also explained that `decreasing' can be relaxed to `pseudo-decreasing'. As we shall see, the rearrangement inequalities \eqref{Hardy 1} and \eqref{Hardy 2} will appear in a natural way. They regulate the interplay of the Orlicz function and the decay of the weight sequence. Such Hardy-type inequalities have been used in a different context in \cite{J2006} and appeared in disguise already in the work \cite{key-Sch2}. As we shall see later, in the Lorentz spaces $\dint(a,q)$ those estimates are valid whenever the weight sequence has a certain regular decay, which is neither too fast nor too slow. This unfortunately also prevents us from recovering Sch\"utt's result for Orlicz spaces from \cite{Sch1995}. However, we add a rich class of new symmetric Banach sequence spaces that embed uniformly into $L_1$.
The proof of Theorem \ref{thm: embedding r-convex} is based on averaging techniques and a new combinatorial result related to Orlicz-Lorentz norms, which generalizes results from \cite{key-K-S1}, \cite{key-Sch2}, and \cite{Sch1995}. It is interesting to note that such combinatorial tools, classically used to obtain embeddings into $L_1$, recently proved to be useful in relation to the variance conjecture on hyperplane projections of $\ell_p^n$-balls \cite{AB2018} or in studying the geometry of Banach spaces between polytopes and zonotopes \cite{GLSW2002} (see also \cite{AGP2017} for a randomized version). We therefore believe that they are interesting in their own right. For further applications of these combinatorial methods in Banach space theory, we refer the reader to, e.g., \cite{key-JMST}, \cite{key-K-S2}, \cite{key-MSS}, \cite{key-Pro2}, \cite{key-Sch4} and the references cited therein. 

\section{Preliminaries}\label{sec:prelim}

We briefly present the notions and some background material used throughout this text. We split this part into several smaller subsections.

\subsubsection*{Basic notions from Banach spaces theory}

Let $X$ and $Y$ be isomorphic Banach spaces. We say that they are
$C$-isomorphic if there is an isomorphism $T:X\rightarrow Y$ with
$\|T\|\|T^{-1}\|\leq C$.
We define the Banach-Mazur distance of $X$ and $Y$ by
    $$
      \dbm(X,Y) = \inf\left\{ \|T\|\|T^{-1}\| \,:\, T\in L(X,Y) ~ \hbox{ isomorphism} \right\}.
    $$
 Let $(X_n)_n$ be a sequence of $n$-dimensional normed spaces and let $Z$ be another normed space. If there exists a constant $C\in(0,\infty)$, such that for all $n\in\N$ there exists a normed space $Y_n \subseteq Z$ with $\dim(Y_n)=n$ and $\dbm(X_n,Y_n)\leq C$, then we say that $(X_n)_n$ embeds uniformly into $Z$ or in short: $X_n$ embeds into $Z$. For a detailed introduction to the concept of Banach-Mazur distances, we refer the reader to \cite{TJ1989}.  
 
 Let $X$ be a Banach space with basis $\{e_i\}_{i=1}^n$. We call the basis $C$-symmetric 
 if there exists a constant $C\in(0,\infty)$ such that for all signs $\e_i =\pm 1$, all sequences $(a_i)_{i=1}^n$ in $\R$, and all permutations $\pi\in\mathfrak{S}_n$
   $$
     \norm{\sum_{i=1}^n a_i e_i}_X \leq C \norm{\sum_{i=1}^n \e_i a_{\pi(i)}e_i}_X.
   $$
 
We shall use the asymptotic notation $a\approx b$ to express that there exist two positive absolute constants $c_1, c_2$ such that $c_1a\leq b\leq c_2 a$ and similarly use $a\lesssim b$ or $a \gtrsim b$. If the constants depend on a parameter $\alpha$, we indicate this by $a\approx_\alpha b$, $a\lesssim_\alpha b$, or $a \gtrsim_\alpha b$. We shall use the notations $\AveP$ and $\AveV$ to denote the averages $(n!)^{-1}\sum_{\pi\in\mathfrak{S}_n}$ and $2^{-n}\sum_{\e\in\{-1,1\}^n}$, respectively. For a parameter $p$, we shall denote by $p^*$ its conjugate for which the relation $\frac{1}{p}+\frac{1}{p^*}=1$ is satisfied.

\subsubsection*{Orlicz spaces}

A convex function $M:[0,\infty) \to [0,\infty)$ with $M(0)=0$ and $M(t)>0$ for $t>0$ is called an Orlicz function. An Orlicz function (as we define it) is bijective and continuous on $[0,\infty)$. We assume that 
an Orlicz function satisfies
  \[
  \lim_{t\to0}\frac{M(t)}{t}=0
  \qquad\mbox{and}\qquad
  \lim_{t\to\infty}\frac{M(t)}{t}=\infty,
  \]
which is typically called an $N$-function. The Orlicz space $\ell^n_M$ is defined as the space $\R^n$ equipped with the Luxemburg norm
 \[
    \norm{x}_M = \inf\left\{ \rho>0 : \sum_{i=1}^n M\left(\frac{\abs{x_i}}{\rho}\right) \leq 1  \right\}.
 \]
An Orlicz function $M$ is called $r$-concave if $t\mapsto M(t^{1/r})$ is a concave function. 
Given an Orlicz function $M$, we define its conjugate function $M^*$ by the Legendre-Transform, i.e.,
 \[
    M^*(x) = \sup_{t\in[0,\infty)}\big(xt-M(t)\big).
 \]
Again, $M^*$ is an Orlicz function and $M^{**}=M$. For instance, taking $M(t)=\frac{1}{p}t^p$, $p\geq 1$, the conjugate function is given by $M^*(t)=\frac{1}{p^*}t^{p^*}$ with $\frac{1}{p^*}+\frac{1}{p}=1$. Notice also that the norm of the dual space $(\ell_M^n)^*$ is equivalent to $\norm{\cdot}_{M^*}$. 
Moreover, one has the duality relation 
\[
t\leq M^{-1}(t)(M^*)^{-1}(t) \leq 2t
\]
for all $t\geq 0$ (see, e.g., \cite{BK1994}).
We say that two Orlicz functions $M$ and $N$ are equivalent
if there are positive constants $a$ and $b$ such that for all
$t\geq0$
$$
 M(at) \leq N(t) \leq M(bt)
$$
which is equivalent to
$$
aN^{-1}(t)\leq M^{-1}(t)\leq bN^{-1}(t).
$$
If two Orlicz functions are equivalent so are their norms.
{Notice that it is enough for the functions $M$ and $N$ to be equivalent in a neighborhood of $0$ for the corresponding sequence spaces $\ell_M$ and $\ell_N$ to coincide  \cite{LT1977}. For a detailed and thorough introduction to the theory of Orlicz spaces, we refer the reader to \cite{key-Kras}, \cite{key-O}, or \cite{key-RR}.

\subsubsection*{Lorentz spaces}

Let $1\leq p <\infty$ and $a_1\geq a_2 \geq \ldots \geq a_n \geq 0$. We define the Lorentz space $\dint^n(a,p)$ to be $\R^n$ equipped with the norm
   \[
      \norm{x}_{\dint^n(a,p)} = \left( \sum_{i=1}^n a_i x_i^{*p} \right)^{1/p},
   \]
where $(x_i^*)_{i=1}^n$ is the non-increasing rearrangement of $(\abs{x_i})_{i=1}^n$. For $a_i\equiv 1$, we simply obtain the space $\ell_p^n$ and for the special choice $a_i=i^{p/q-1}$, $1\leq p <q <\infty$, as a weight sequence, we denote the corresponding Lorentz spaces by $\ell_{q,p}^n$ and write $\|\cdot \|_{q,p}$ for their norms. Using the Cauchy condensation device, one can easily show that for all $1\leq p_1 \leq p_2 \leq q<\infty$ and every $x\in\R^n$,
\[
\|x\|_{q,p_2} \leq 2^{1/p_1}\,\|x\|_{q,p_1}.
\] 
For more information on the geometry of Lorentz spaces, we refer the reader to \cite{key-L} or \cite{Pietsch1987}, and for a presentation in the context of interpolation theory we refer to \cite{BennettSharpley1988}.

\subsubsection*{Orlicz-Lorentz spaces}

Glueing together Orlicz and Lorentz spaces in an $\ell_M$-fashion, we define an Orlicz-Lorentz space $\ell^n_{M,a}$ with weight sequence $a_1\geq a_2 \geq \ldots \geq a_n \geq 0$ and Orlicz function $M$ to be $\R^n$ with the norm
  \[
      \norm{x}_{M,a} = \inf \left\{ \rho > 0 : \sum_{i=1}^n M\left(\frac{a_i x_i^*}{\rho}\right) \leq 1 \right \}.
  \]
The standard unit vector basis in $\R^n$ is a $1$-symmetric basis for these spaces and turns them into symmetric Banach spaces. For the choice $M(t)=t^p$, we obtain the Lorentz space $\dint^n(a,p)$, and for $a_i\equiv 1$ the space $\ell_{M,a}^n$ is simply the Orlicz space $\ell_M^n$. These spaces have a rich structure and we refer the reader to, for instance, the work of Montgomery-Smith \cite{MS1992}. 

\subsubsection*{Combinatorial results}
  
As we have already mentioned, our proof is based on a combinatorial approach. We will need the following deep result obtained by the author and Carsten Sch\"utt in their joined work \cite[Lemma 2.6]{PS2012}.

\begin{lem} \label{THM JoschaCarsten 1}
  Let $n\in\N$, $1\leq p < r <\infty$ and $a\in\R^n$ such that $a_1 \geq a_2 \geq \ldots \geq a_n > 0$. There exists an Orlicz function 
  $N$ such that for   all $\ell=1,\ldots,n$
    $$
      N^{*-1}\left(\frac{\ell}{n}\right) \approx \left( \frac{\ell}{n} \right)^{1/p^*}\left(\frac{1}{n}\sum_{i=1}^{\ell} |a_i|^p \right)^{1/p} 
      + \left( \frac{\ell}{n} \right)^{1/r^*}\left(\frac{1}{n}\sum_{i=\ell+1}^n |a_i|^r\right)^{1/r}.
    $$
For all such Orlicz functions and all $x\in\R^n$, we have
  $$
    c_1(r,p) \norm{x}_N \leq \left( \AveP \left( \sum_{i=1}^n |x_ia_{\pi(i)}|^r \right)^{p/r} \right)^{1/p}
    \leq c_2(r,p) \norm{x}_N,
  $$
where $c_1(r,p), c_2(r,p)\in(0,\infty)$ are constants depending only on $r$ and $p$.    
\end{lem}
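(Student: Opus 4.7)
The plan has two parts matching the two assertions: first construct the Orlicz function $N$ with the prescribed inverse, then establish the norm equivalence with the permutation average.

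For the construction of $N$, I would verify that the right-hand side of the stated formula, viewed as a function of $t = \ell/n \in (0,1]$, is pseudo-concave and non-decreasing: the first summand behaves like $t^{1/p^*}$ times a non-decreasing function of $t$, and the second summand like $t^{1/r^*}$ times something similar, so their sum is (up to absolute constants) the inverse of a bona fide Orlicz function. Extending to $[0,\infty)$ by a piecewise-linear or log-linear interpolation and applying the elementary duality $t \leq M^{-1}(t)(M^*)^{-1}(t)\leq 2t$ cited in the preliminaries, one obtains an Orlicz function $N$, unique up to equivalence, whose conjugate inverse agrees (up to absolute constants) with the prescribed RHS at the lattice points $\ell/n$.

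For the norm equivalence, I would assume by symmetry that both $x$ and $a$ are non-negative and non-increasingly arranged. Decompose $\{1,\dots,n\}$ into dyadic blocks $B_k$ on which $x_i$ is comparable to $x_{2^k}^*$, and analogously for $a$. The sum $\sum_{i}|x_ia_{\pi(i)}|^r$ then splits according to the matching profile of $\pi$, i.e., according to which block of $x$ is paired with which block of $a$. Classical Kwapie\'n--Sch\"utt combinatorial estimates from \cite{key-K-S1} allow one to control the permutation average of each such matching sum by an expression of the form $x_\ell \cdot (\text{partial mean of }a_j)$. Raising to the power $p/r$, summing over $\ell$, and using Jensen's inequality (with exponent $p/r < 1$), I would recognize these partial means as the two summands of $(N^*)^{-1}(\ell/n)$: the first captures the bulk contribution from large coordinates paired with large $a_j$, and the second absorbs the tail contribution from small coordinates paired with small $a_j$. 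Choosing the Luxemburg scale $\rho = \norm{x}_N$ then yields the upper bound with constant $c_2(p,r)$. For the lower bound, I would dualize on both sides, replacing $\norm{x}_N$ by a supremum over $\norm{y}_{N^*}\leq 1$ and the $L_p$-of-$\ell_r$ structure on the right by its $L_{p^*}$-of-$\ell_{r^*}$ dual, then test against a permutation $\pi_0$ that realizes the matching predicted by a near-optimal dual pair $(y,z)$; by a pigeonhole argument, a positive fraction of all permutations lies in a neighborhood of $\pi_0$ with the same matching profile, giving the lower bound with constant $c_1(p,r)$.

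The main obstacle is keeping all constants independent of $n$ and of $a$, depending only on $p$ and $r$. The nontrivial correlations under uniform permutation averaging (in stark contrast to independent random signs or Gaussians) force the matching-profile plus dyadic discretization technique, and one must track carefully that the discretization losses, the interpolation losses in the definition of $N$, the passage from $\max$ to $\ell_r$ via layer-cake, and the $N/N^*$ duality factor all compound to merely an $O(1)$ factor depending only on $p$ and $r$.
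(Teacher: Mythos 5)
The first thing to say is that the paper does not prove this lemma at all: it is imported verbatim as \cite[Lemma 2.6]{PS2012}, so there is no in-paper proof to measure your outline against. Judged on its own, your sketch does follow the general strategy of that combinatorial line of work (dyadic blocks, Kwapie\'n--Sch\"utt matching estimates, Luxemburg scaling, duality), but it contains a step that fails as stated.

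The serious gap is the lower bound. You propose to choose a near-optimal permutation $\pi_0$ and argue that ``a positive fraction of all permutations lies in a neighborhood of $\pi_0$ with the same matching profile.'' With constants independent of $n$ this is false: the set of permutations realizing a prescribed dyadic block-to-block matching has measure $\prod_k |B_k|!/n!$, which is exponentially small in $n$ (already the single event $\pi(B)=B'$ for two blocks of size $n/2$ has probability $1/\binom{n}{n/2}$). Lower bounds in this circle of results are obtained differently: either by pulling the permutation average inside the outer norm via the triangle/Jensen inequality after freezing the other layers of averaging (exactly what the present paper does in the proof of Theorem 3.1, where $\AveP$ is moved inside the $\ell_p$-sum over $k$), or by the counting fact that the probability that some $i\le\ell$ satisfies $\pi(i)\le k$ is at least $1-e^{-k\ell/n}$, hence bounded below by a universal constant once $k\ell\gtrsim n$; this is what drives the Kwapie\'n--Sch\"utt estimate for $\AveP\max_i|x_ia_{\pi(i)}|$ and its $\ell_r$-valued extensions. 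Without replacing the pigeonhole step by one of these mechanisms, your $c_1(p,r)$ degenerates with $n$.

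A secondary issue is the construction of $N$. You describe the second summand as $t^{1/r^*}$ times ``something similar'' to a non-decreasing function, but $\big(\frac1n\sum_{i=\ell+1}^n|a_i|^r\big)^{1/r}$ is non-increasing in $\ell$, so that summand alone is in general neither monotone nor quasi-concave (take $a_{\ell+1}$ large and the remaining tail tiny). The quasi-concavity trick (a non-decreasing $f$ with $f(t)/t$ non-increasing is $2$-equivalent to its least concave majorant) applies cleanly to the first summand only; for the sum one must additionally check that any decrement of the second term is dominated by the first term before extracting an Orlicz function $N^*$. This is fixable but is a real verification, not a one-line observation.
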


For the better understanding of the combinatorial methods involved, let us show how an $\ell_p$-norm is generated by averaging over permutations. We will use this result frequently throughout this text and therefore include a proof. 

\begin{cor} \label{EXA a erzeugt p-Norm}
  Let $n\in\N$ and $1 < p <2$. The vector $(a_i)_{i=1}^n = \big((n/i)^{1/p}\,\big)_{i=1}^n$ generates the $\ell_p$-norm, i.e., for 
  all $x\in\R^n$,
    $$
      c_1(p)\norm{x}_p \leq \AveP \left( \sum_{i=1}^n |x_ia_{\pi(i)}|^2\right)^{1/2} \leq c_2(p) \norm{x}_p,
    $$
  where $c_1(p),c_2(p)\in(0,\infty)$ are constants depending only on $p$.
\end{cor}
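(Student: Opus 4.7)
The strategy is to specialize Lemma \ref{THM JoschaCarsten 1} at $p_{\mathrm{lem}}=1$ and $r_{\mathrm{lem}}=2$, applied to the weight sequence $a_i=(n/i)^{1/p}$. With these parameter choices the middle expression in the lemma becomes exactly $\AveP(\sum_{i=1}^n|x_ia_{\pi(i)}|^2)^{1/2}$, which is the quantity on the left-hand side of the corollary. It therefore suffices to identify the Orlicz function $N$ produced by the lemma as being equivalent (up to $p$-dependent constants) to $t\mapsto t^p$, so that $\|\cdot\|_N\approx_p\|\cdot\|_p$.

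To do this I would compute the two pieces of the formula for $N^{*-1}(\ell/n)$ separately for this specific weight. Since $1/p_{\mathrm{lem}}^*=0$, the first summand equals $n^{1/p-1}\sum_{i=1}^\ell i^{-1/p}$, and since $1/p<1$ an integral comparison yields $\sum_{i=1}^\ell i^{-1/p}\approx_p \ell^{1-1/p}$, so this summand is $\approx_p (\ell/n)^{1-1/p}=(\ell/n)^{1/p^*}$. For the second summand, using $2/p>1$, an analogous integral estimate gives $\sum_{i=\ell+1}^n i^{-2/p}\lesssim_p \ell^{1-2/p}$; combined with the prefactor $(\ell/n)^{1/2}$ and the factor $n^{1/p-1/2}$ coming out of $(\frac{1}{n}\sum a_i^2)^{1/2}$, this term contributes at most a quantity of the same order $(\ell/n)^{1/p^*}$. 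The matching lower bound is already supplied by the first summand alone, so in total
$$
N^{*-1}(\ell/n)\ \approx_p\ (\ell/n)^{1/p^*} \qquad \text{for all }\ell=1,\dots,n.
$$

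On the other hand, a direct Legendre-transform calculation shows that the Orlicz function $\widetilde N(t):=t^p$ satisfies $\widetilde N^{*-1}(u)=c(p)\,u^{1/p^*}$, so $\widetilde N$ itself is a valid choice of Orlicz function in the lemma. Since $\|\cdot\|_{\widetilde N}=\|\cdot\|_p$ by definition of the Luxemburg norm, the conclusion of Lemma \ref{THM JoschaCarsten 1} applied with $N=\widetilde N$ yields exactly the statement of the corollary, with $c_1(p),c_2(p)$ absorbing the $p$-dependent constants above.

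The only mildly delicate point is to exploit the flexibility offered by the lemma: it guarantees the averaging equivalence for \emph{any} Orlicz function satisfying the $N^{*-1}$-formula at the lattice points $\ell/n$ up to universal constants, which is precisely what lets us replace the abstract $N$ furnished by the proof of the lemma by the explicit $t\mapsto t^p$. No further technical obstacle arises; the key analytic content is the clean cancellation in the two integral estimates, which makes both summands of the $N^{*-1}$-formula contribute at exactly the same order $(\ell/n)^{1/p^*}$.
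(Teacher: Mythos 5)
Your proposal is correct and follows essentially the same approach as the paper's own proof: both specialize Lemma~\ref{THM JoschaCarsten 1} with the inner exponent $1$ and outer exponent $2$, then use integral comparisons to show both summands in the $N^{*-1}$-formula are of order $(\ell/n)^{1/p^*}$, thereby identifying $N$ with $t\mapsto t^p$ (a step the paper leaves slightly more implicit than you do).
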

\begin{proof}
  First, we observe that, for all $m\leq n$,
    $$
       \frac{1}{n} \sum_{i=1}^m a_i +
      \left(\frac{m}{n}\right)^{1/r^*}\left( \frac{1}{n} \sum_{i=m+1}^n \abs{a_i}^r \right)^{1/r}
      \geq \frac{1}{n} \sum_{i=1}^m a_i,
    $$
  where
    $$
      \frac{1}{n} \sum_{i=1}^m a_i   =  \frac{1}{n} \sum_{i=1}^m \left(\frac{n}{i}\right)^{1/p}
       =   \frac{1}{n^{1/p^*}} \sum_{i=1}^m i^{-1/p} = \frac{1}{n^{1/p^*}} \left( 1 + \sum_{i=2}^m i^{-1/p} \right).
    $$
  A standard integral estimate shows that
    $$
      \frac{1}{n} \sum_{i=1}^m \left(\frac{n}{i}\right)^{1/p} \leq \frac{2}{1-\frac{1}{p}} \left(\frac{m}{n}\right)^{1/p^*}.
    $$
  Furthermore, a similar argument shows that, for all  $m\geq 2$,
   \begin{align*}
      \sum_{i=1}^m i^{-1/p} 
      \geq \frac{1}{1-\frac{1}{p}} \frac{m^{1/p^*}}{2}\,.
    \end{align*}
  Hence, for all $m\leq n$
    $$
      \frac{1}{n} \sum_{i=1}^m \left(\frac{n}{i}\right)^{1/p} \geq \frac{1}{2(1-\frac{1}{p})} \left(\frac{m}{n}\right)^{1/p^*},
    $$
  since the case $m=1$ is obvious. Therefore, we have for all $m\leq n$
    $$
      \frac{1}{2(1-\frac{1}{p})} \left(\frac{m}{n}\right)^{1/p^*} \leq \frac{1}{n} \sum_{i=1}^m \left(\frac{n}{i}\right)^{1/p}
      \leq \frac{2}{1-\frac{1}{p}} \left(\frac{m}{n}\right)^{1/p^*}
    $$
  and hence
    $$
       \frac{1}{n} \sum_{i=1}^m a_i +
      \left(\frac{m}{n}\right)^{1/r^*}\left( \frac{1}{n} \sum_{i=m+1}^n \abs{a_i}^r \right)^{1/r}
      \geq \frac{1}{2(1-\frac{1}{p})} \left(\frac{m}{n}\right)^{1/p^*}.
    $$
  In addition, we have
    $$
      \left(\frac{m}{n}\right)^{1/2}\left(\frac{1}{n}\sum_{i=m+1}^n \abs{a_i}^2\right)^{1/2}  =
      \frac{\sqrt{m}}{n^{1/p^*}} \left( \sum_{i=m+1}^n i^{-2/p}\right)^{1/2}.
    $$
  Again, a simple integral estimate yields
    $$
      \sum_{i=m+1}^n i^{-2/p} 
      \leq \frac{1}{\frac{2}{p}-1} \left(m^{-2/p+1} - n^{-2/p+1}\right),
    $$
  and so we have
    $$
      \left( \sum_{i=m+1}^n i^{-2/p}\right)^{1/2} \leq \left(\frac{1}{\frac{2}{p}-1}m^{-2/p+1} \right)^{1/2}
      = \left(\frac{1}{\frac{2}{p}-1}\right)^{1/2}m^{-1/p+1/2}.
    $$
  Therefore,
    $$
      \frac{\sqrt{m}}{n^{1/p^*}} \left( \sum_{i=m+1}^n i^{-2/p}\right)^{1/2} \leq
      \frac{\sqrt{m}}{n^{1/p^*}} \left(\frac{1}{\frac{2}{p}-1}\right)^{1/2}m^{-1/p+1/2}
      = \left(\frac{1}{\frac{2}{p}-1}\right)^{1/2}\left(\frac{m}{n}\right)^{1/p^*}.
    $$
  Hence, for all $m\leq n$
    \begin{eqnarray*}
       \frac{1}{n} \sum_{i=1}^m a_i +
      \left(\frac{m}{n}\right)^{1/2}\left( \frac{1}{n} \sum_{i=k+1}^n \abs{a_i}^2 \right)^{1/2}
      & \leq & \frac{2}{1-\frac{1}{p}} \left(\frac{m}{n}\right)^{1/p^*}
      + \left(\frac{1}{\frac{2}{p}-1}\right)^{1/2}\left(\frac{m}{n}\right)^{1/p^*} \\
      & = & \left(\frac{2}{1-\frac{1}{p}} + \frac{1}{\sqrt{\frac{2}{p}-1}}\right)\left(\frac{m}{n}\right)^{1/p^*},
    \end{eqnarray*}
    which concludes the proof.
\end{proof}

Another combinatorial tool that we shall use is the following result of Sch\"utt taken from~\cite[Lemma 2.3]{key-Sch2}. 

\begin{lem}\label{KOR Groessenordnung Mittel und 2Norm pNorm mix}
  Let $n\in\N$ and $1\leq p \leq 2$. Then, for all $x\in\R^{n}$ and any $k=1.\dots,n$, 
    \begin{align*}
    \Bigg( \AveP \bigg(\sum_{i\leq \frac{n}{k}}\abs{x_{\pi(i)}}^2\bigg)^{p/2}\Bigg)^{1/p}
     & \approx \bigg(\frac{1}{k}\sum_{i=1}^k|x_i^*|^p\bigg)^{1/p}
    +  \bigg(\frac{1}{k} \sum_{i=k+1}^{n}|x_i^*|^2\bigg)^{1/2},
  \end{align*}
 where $x_i^*$, $i=1,\dots,n$ denotes the non-increasing rearrangement of the numbers $|x_i|$, $i=1,\dots,n$. 
\end{lem}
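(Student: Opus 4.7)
I would first reduce to the case where the coordinates of $x$ are arranged in non-increasing order, $|x_i| = x_i^*$, using the permutation invariance of $\AveP$. Set $m = \lfloor n/k\rfloor$ and observe that $\{\pi(1),\dots,\pi(m)\}$ is a uniformly random size-$m$ subset $A\subset\{1,\dots,n\}$, so the left-hand side becomes $(\AveP S^{p/2})^{1/p}$ with $S := \sum_{j\in A}|x_j^*|^2 = S_L + S_S$, where I split at the threshold $k$ via $S_L := \sum_{j\in A,\,j\le k}|x_j^*|^2$ and $S_S := \sum_{j\in A,\,j>k}|x_j^*|^2$. The upper direction is straightforward: since $p \leq 2$, subadditivity of $t\mapsto t^{p/2}$ gives $(S_L+S_S)^{p/2}\le S_L^{p/2}+S_S^{p/2}$; applying subadditivity a second time inside $S_L^{p/2}$ and averaging gives $\AveP S_L^{p/2}\le \sum_{j\le k}|x_j^*|^p\,P[j\in A]=\frac{1}{k}\sum_{j\le k}|x_j^*|^p$, while Jensen (concavity of $t\mapsto t^{p/2}$) gives $\AveP S_S^{p/2}\le (\AveP S_S)^{p/2}=\bigl(\frac{1}{k}\sum_{j>k}|x_j^*|^2\bigr)^{p/2}$.

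For the lower direction I plan a case split according to which RHS term dominates. Let $a:=\frac{1}{k}\sum_{j\le k}|x_j^*|^p$ and $b:=\bigl(\frac{1}{k}\sum_{j>k}|x_j^*|^2\bigr)^{p/2}$. In Case A, namely $b\le 2a$, I target $a$. Let $N_L:=|A\cap\{1,\dots,k\}|$; a hypergeometric computation shows $P[N_L=1]\gtrsim 1$ uniformly in $k$ and $n$ (heuristically of order $e^{-1}$). Conditional on $\{N_L=1\}$ the unique element of $A\cap\{1,\dots,k\}$ is uniform on $\{1,\dots,k\}$ by permutation symmetry, so $\AveP\bigl(S_L^{p/2}\mathbf{1}_{N_L=1}\bigr) = P[N_L=1]\cdot a\gtrsim a$; combined with $S^{p/2}\ge S_L^{p/2}$ this yields $\AveP S^{p/2}\gtrsim a\approx a+b$ in this case.

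Case B, where $b>2a$, is the more delicate direction. Here I argue via a Paley-Zygmund-type inequality on $S_S$. The case assumption, combined with the elementary bound $a\ge (x_k^*)^p$ (since $x_j^*\ge x_k^*$ for all $j\le k$), forces $\frac{1}{k}\sum_{j>k}|x_j^*|^2\ge 2(x_k^*)^2$, which is the quantitative statement that the small coordinates are spread out rather than concentrated in a single position. A direct second moment computation gives $\AveP S_S^2\le \frac{1}{k}\sum_{j>k}|x_j^*|^4+\frac{1}{k^2}\bigl(\sum_{j>k}|x_j^*|^2\bigr)^2$, and combining $|x_j^*|\le x_k^*$ for $j>k$ with the Case B assumption yields $\AveP S_S^2\lesssim(\AveP S_S)^2$. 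Paley-Zygmund then provides $P[S_S\ge\tfrac{1}{2}\AveP S_S]\gtrsim 1$, whence $\AveP S^{p/2}\ge\AveP S_S^{p/2}\gtrsim (\AveP S_S)^{p/2}=b\approx a+b$.

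The main obstacle is precisely the need for this case split: neither strategy alone is enough. A Paley-Zygmund bound applied unconditionally to $S_S$ fails when the small coordinates are concentrated in a single position (e.g., $x_{k+1}^*\neq 0$ with $x_j^*=0$ for $j>k+1$), because then $S_S$ does not concentrate around its mean. Conversely, the one-large-coordinate-lands argument cannot recover the $b$ contribution when the small tail is genuinely dominant. The delicate observation that makes everything work is that the failure mode of Paley-Zygmund — a single dominant small coordinate, which inflates the ratio $\AveP S_S^2/(\AveP S_S)^2$ — occurs precisely in the regime where $a$ already dominates $b$, so Case A takes care of it.
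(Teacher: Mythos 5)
Note first that the paper does not give its own proof of this lemma: it is quoted directly from Sch\"utt \cite[Lemma~2.3]{key-Sch2}, so there is no in-paper argument to compare against. Judging your proof on its own, it is essentially correct, and it is a clean probabilistic re-derivation of a result whose original proof in Sch\"utt's paper is carried out by more hands-on combinatorial estimates. Your identification of $\{\pi(1),\dots,\pi(\lfloor n/k\rfloor)\}$ with a uniformly random size-$\lfloor n/k\rfloor$ subset $A$, the split $S=S_L+S_S$ at the threshold $k$, the elementary upper bound by subadditivity of $t^{p/2}$ plus Jensen, and the lower-bound case split between ``the head dominates'' and ``the tail dominates'' all track the right structure. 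In particular, the delicate point you flag at the end is exactly right: in the regime where the tail $\frac1k\sum_{j>k}|x_j^*|^2$ is dominated by $(x_k^*)^2$, the variable $S_S$ does not concentrate, and one must instead harvest the head via the event $\{|A\cap\{1,\dots,k\}|=1\}$. That event has probability bounded below by an absolute constant (the hypergeometric $N_L$ has mean $\approx 1$ and one can check $P[N_L=1]\geq e^{-4}/2$, say, uniformly in $k,n$), and conditionally on it the unique hit is uniform on $\{1,\dots,k\}$, which yields exactly $a=\frac1k\sum_{j\le k}|x_j^*|^p$. In Case~B your Paley--Zygmund step is sound: $b>2a\geq 2(x_k^*)^p$ forces $\frac1k\sum_{j>k}|x_j^*|^2\geq 2(x_k^*)^2$, and since $|x_j^*|\leq x_k^*$ for $j>k$ this bounds the $\frac1k\sum_{j>k}|x_j^*|^4$ term by $\frac{1}{2k^2}(\sum_{j>k}|x_j^*|^2)^2$, giving $\AveP S_S^2\lesssim(\AveP S_S)^2$ and hence a uniform lower bound on $P[S_S\geq\frac12\AveP S_S]$. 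Two points you should still write out carefully to make this a complete proof: a short verification of the uniform lower bound on $P[N_L=1]$ (your ``heuristically of order $e^{-1}$'' is correct but not a proof; a product estimate $\prod_{j=1}^{k-1}(1-\frac{m-1}{n-j})\geq e^{-c}$ does the job), and the final passage from $\AveP S^{p/2}\approx a+b$ to the stated form after taking $1/p$-th powers, using $(a+b)^{1/p}\approx a^{1/p}+b^{1/p}$ for $p\geq1$. Neither is a real obstruction, and all implicit constants come out absolute as required.
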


Readers familiar with interpolation theory might recognize expressions of this type. Indeed, a result of this flavor has also been obtained via real interpolation techniques by Lechner, Passenbrunner, and Prochno in \cite{LPP2015}.

\section{Combinatorial elements of the proof}

Before we come to the embedding of Orlicz-Lorentz spaces into $L_1$, we require a new combinatorial averaging device. We prove that an abstract Orlicz-Lorentz norm can be generated by permutation-averages of an $\ell_2$-norm of suitably chosen vectors. Recall that to any vector $d\in\R^n$ whose coordinates are non-increasing there corresponds an Orlicz function as shown in Lemma \ref{THM JoschaCarsten 1}.

\begin{satz}\label{THM combinatorial result}
  Let $1<p<2$, $z=\big(( n/i)^{1/p}\big)_{i=1}^n$, and $d_1\geq \ldots \geq d_n>0$. Let $a_1\geq \ldots \geq a_n>0$ and consider for $k=1,\dots,n$ the vectors $c^k=(a_k,\dots,a_k,0,\dots,0)\in\R^n$ with $n/k$ non-zero entries. Assume that, for all $x\in\R^n$,
      \begin{equation}\label{Hardy 1}
        \bigg\| \bigg(\Big(\frac{1}{k}\sum_{i=1}^k|x_i^*|^p \Big)^{1/p}\bigg)_{k=1}^n \bigg\|_{M_d,a} \leq C_1 \,\|x\|_{M_d,a}
      \end{equation}
      and
      \begin{equation}\label{Hardy 2}
        \bigg\| \bigg(\Big(\frac{1}{k}\sum_{i=k+1}^n|x_i^*|^2 \Big)^{1/2}\bigg)_{k=1}^n \bigg\|_{M_d,a} \leq C_2\, \|x\|_{M_d,a},  
      \end{equation}
      where $M_d$ is the Orlicz function related to $d$ and $C_1,C_2\in(0,\infty)$ are absolute constants.
  Then, for all $x\in\R^n$,  
  $$
   c_1(p)\norm{x}_{M_d,a} \leq \AvePPP \left( \sum_{i,k=1}^n |x_i c^k_{\pi(i)} d_{\sigma(k)} z_{\eta(k)} |^2 \right)^{1/2} \leq c_2(p) \norm{x}_{M_d,a},
  $$
  where $c_1(p),c_2(p)\in(0,\infty)$ are constants depending only on $p$.
\end{satz}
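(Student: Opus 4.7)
The plan is to evaluate the triple permutation average one coordinate at a time, in the order $\eta$, $\sigma$, $\pi$: the $\eta$-average converts the inner $\ell_2$-weight into an $\ell_p$-norm (via Corollary \ref{EXA a erzeugt p-Norm}), the $\sigma$-average converts this $\ell_p$-norm into the Orlicz norm $\|\cdot\|_{M_d}$ (via Lemma \ref{THM JoschaCarsten 1} applied to $d$), and the $\pi$-average produces the Hardy-type sums (via Lemma \ref{KOR Groessenordnung Mittel und 2Norm pNorm mix}). The two Hardy hypotheses then identify the result with $\|x\|_{M_d,a}$.

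First I would rewrite the inner sum using $c^k_{\pi(i)}=a_k\mathbf{1}_{\{\pi(i)\le n/k\}}$:
\[
\sum_{i,k}|x_i c^k_{\pi(i)} d_{\sigma(k)} z_{\eta(k)}|^2
=\sum_{k=1}^n a_k^2\,d_{\sigma(k)}^2\,z_{\eta(k)}^2\,T_k(\pi),\qquad
T_k(\pi):=\sum_{i:\pi(i)\le n/k}x_i^2.
\]
With $\pi,\sigma$ fixed, Corollary \ref{EXA a erzeugt p-Norm} applied to the $n$-vector $(a_k d_{\sigma(k)}\sqrt{T_k(\pi)})_k$ with weight $z$ yields
\[
\underset{\eta}{\operatorname{Ave}}\Big(\sum_k(a_k d_{\sigma(k)}\sqrt{T_k(\pi)})^2 z_{\eta(k)}^2\Big)^{1/2}
\approx_p\Big(\sum_k a_k^p d_{\sigma(k)}^p T_k(\pi)^{p/2}\Big)^{1/p}.
\]
With $\pi$ still fixed, Lemma \ref{THM JoschaCarsten 1} applied to the $\sigma$-average with outer exponent $1$ and inner exponent $p$ (so $1<p$), on the non-increasing vector $d$, produces by construction the Orlicz norm $\|\cdot\|_{M_d}$, and hence
\[
\AvePs\Big(\sum_k(a_k\sqrt{T_k(\pi)})^p d_{\sigma(k)}^p\Big)^{1/p}
\approx_p\|a\cdot\sqrt{T(\pi)}\|_{M_d}.
\]
Combining these two reductions identifies the left-hand side of the theorem, up to a $p$-dependent constant, with $\AveP\|a\cdot\sqrt{T(\pi)}\|_{M_d}$.

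The pivotal step is the $\pi$-average. Writing
\[
h_k:=\Big(\tfrac1k\sum_{i=1}^k|x_i^*|^p\Big)^{1/p}+\Big(\tfrac1k\sum_{i=k+1}^n|x_i^*|^2\Big)^{1/2},
\]
Lemma \ref{KOR Groessenordnung Mittel und 2Norm pNorm mix} provides $(\AveP T_k(\pi)^{p/2})^{1/p}\approx h_k$. A Khintchine--Kahane-type moment comparison for the permutation-sampled sum $T_k(\pi)$ then lifts this to $\AveP T_k(\pi)^{1/2}\approx_p h_k$ coordinatewise, and (more importantly) to the corresponding stability of the $M_d$-norm,
\[
\AveP\|a\cdot\sqrt{T(\pi)}\|_{M_d}\approx_p\|a\cdot h\|_{M_d}.
\]
Since $k\mapsto h_k$ is non-increasing (each of its two summands is non-increasing in $k$), the right-hand side equals $\|h\|_{M_d,a}$.

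Finally, the Hardy hypotheses close the argument. Decomposing $h=h^{(1)}+h^{(2)}$ with $h^{(1)}_k=(\tfrac1k\sum_{i\le k}|x_i^*|^p)^{1/p}$ and $h^{(2)}_k=(\tfrac1k\sum_{i>k}|x_i^*|^2)^{1/2}$, the triangle inequality and the two Hardy-type assumptions give $\|h\|_{M_d,a}\le(C_1+C_2)\|x\|_{M_d,a}$, which is the desired upper bound; the elementary estimate $x_k^*\le h^{(1)}_k\le h_k$ combined with the monotonicity of the Orlicz norm gives the matching lower bound $\|x\|_{M_d,a}=\|a\cdot x^*\|_{M_d}\le\|a\cdot h\|_{M_d}$. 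I expect the main obstacle to be the upper half of the $\pi$-average step: the Jensen direction $\AveP\|a\sqrt{T(\pi)}\|_{M_d}\ge\|a\cdot h\|_{M_d}$ is immediate from the convexity of the Orlicz norm and the coordinatewise estimate $\AveP T_k(\pi)^{1/2}\gtrsim_p h_k$, but reversing it requires a careful moment-comparison that exploits the specific $p$-structure of $M_d$ furnished by Lemma \ref{THM JoschaCarsten 1}.
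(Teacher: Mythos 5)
Your $\eta$-average and $\sigma$-average steps (with $\pi$ held fixed) are carried out correctly and use Corollary \ref{EXA a erzeugt p-Norm} and Lemma \ref{THM JoschaCarsten 1} in the same way the paper does, but the ordering $\eta\to\sigma\to\pi$ leaves you needing the inequality
\[
\AveP\big\|\big(a_k\sqrt{T_k(\pi)}\big)_k\big\|_{M_d}\ \lesssim_p\ \big\|\big(a_k h_k\big)_k\big\|_{M_d},
\]
and this is where the argument breaks. Convexity of $\|\cdot\|_{M_d}$ gives only the reverse direction $\AveP\|f(\pi)\|\ge\|\AveP f(\pi)\|$; there is no Khintchine--Kahane or moment-comparison in the toolbox that lets you pull a permutation-average out of an abstract Orlicz norm, and none of the cited lemmas speak to this. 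You identify the obstacle yourself, but it is not a removable technicality --- it is the crux of the proof, and your route does not give it.

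The paper dodges this entirely by keeping the $\sigma$-average on the outside and disposing of the $\pi$-average \emph{before} the Orlicz norm ever appears. After the $\eta$-average the expression is $\AvePP\big(\sum_k|d_{\sigma(k)}|^pa_k^p\,T_k(\pi)^{p/2}\big)^{1/p}$. For the upper bound one applies Jensen (concavity of $t\mapsto t^{1/p}$) for each fixed $\sigma$ to the scalar $\sum_k|d_{\sigma(k)}|^pa_k^pT_k(\pi)^{p/2}$, obtaining $\AveP(\cdot)^{1/p}\le(\AveP(\cdot))^{1/p}$, so that the $\pi$-average lands inside the $p$-th power and acts only on the concrete $\ell_2$-quantities $T_k(\pi)^{p/2}$. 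Then Lemma \ref{THM JoschaCarsten 1} applied to the remaining $\sigma$-average yields the $M_d$-norm of the \emph{deterministic} vector $\big(a_k(\AveP T_k(\pi)^{p/2})^{1/p}\big)_k$, and Lemma \ref{KOR Groessenordnung Mittel und 2Norm pNorm mix} plus the Hardy hypotheses close the argument; the lower bound proceeds analogously via the $\ell_p$-triangle inequality. The key point is that the paper's Jensen step operates in the $\ell_p$-world, not the $M_d$-world, so the problematic interchange of expectation and Orlicz norm never arises. A minor further flag: your coordinatewise claim $\AveP T_k(\pi)^{1/2}\gtrsim_p h_k$ is reversed --- Lemma \ref{KOR Groessenordnung Mittel und 2Norm pNorm mix} with exponent $1$ produces the $\ell_1$-average $\tfrac1k\sum_{i\le k}x_i^*$, which is $\le$ the $\ell_p$-average appearing in $h_k$; for the lower bound of the theorem one only needs $\tfrac1k\sum_{i\le k}x_i^*\ge x_k^*$, and that part of your argument is fine.
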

\begin{proof}
  Let us start with the lower bound. As we know from Corollary \ref{EXA a erzeugt p-Norm}, the vector $z$ generates the $\ell_p$-norm and so
    $$
      \AvePPP \left( \sum_{i,k=1}^n |x_i c^k_{\pi(i)} d_{\sigma(k)} z_{\eta(k)} |^2 \right)^{1/2} \approx_p \AvePP \left( \sum_{k=1}^n \left( \sum_{i=1}^n | x_i c^k_{\pi(i)} d_{\sigma(k)} |^2 \right)^{p/2} \right)^{1/p}\,.
    $$
  The triangle inequality yields the estimate
    \begin{align*}
     \AvePP \left( \sum_{k=1}^n \left( \sum_{i=1}^n |x_i c^k_{i\pi(i)} d_{\sigma(k)}|^2 \right)^{p/2} \right)^{1/p}
     & \geq    \AvePs \left( \sum_{k=1}^n \bigg| \AveP \left( \sum_{i=1}^n | x_i c^k_{\pi(i)} d_{\sigma(k)} |^2 \right)^{1/2} \bigg|^p \right)^{1/p} \\
     & =  \AvePs \left( \sum_{k=1}^n |d_{\sigma(k)}|^p \bigg| \AveP \left( \sum_{i=1}^n |x_i c^k_{\pi(i)}|^2 \right)^{1/2} \bigg|^p \right)^{1/p}.
    \end{align*}
  An application of Lemma \ref{KOR Groessenordnung Mittel und 2Norm pNorm mix}, where we simply omit the quadratic term, shows that
    $$
     \AvePs \left( \sum_{k=1}^n |d_{\sigma(k)}|^p \bigg| \AveP \left( \sum_{i=1}^n |x_i c^k_{\pi(i)}|^2 \right)^{1/2} \bigg|^p \right)^{1/p}
     \approx \AvePs \left( \sum_{k=1}^n |d_{\sigma(k)}|^p \bigg| \frac{1}{k} \sum_{i=1}^k x_i^* \bigg|^p a_k^p \right)^{1/p}.
    $$
  By Lemma \ref{THM JoschaCarsten 1} there exists an Orlicz function $M_d$ such that
    $$
       \AvePs \left( \sum_{k=1}^n |d_{\sigma(k)}|^p \bigg| \frac{1}{k} \sum_{i=1}^k x_i^* \bigg|^p a_k^p \right)^{1/p} 
       \approx_p \norm{\left(a_k  \frac{1}{k} \sum_{i=1}^k x_i^* \right)_{k=1}^n}_{M_d}
       \geq  \norm{\left(a_k x_k^* \right)_{k=1}^n}_{M_d},
    $$
  where we used that $\sum_{i=1}^k x_i^* \geq k x^*_k$ for any $k=1,\ldots,n$. Therefore, we obtain the desired lower bound
    $$
       \AvePPP \left( \sum_{i,k=1}^n |x_i c^k_{\pi(i)} d_{\sigma(k)} z_{\eta(k)} |^2 \right)^{1/2} \geq c_1(p) \norm{x}_{M_d,a},
    $$
  where the constant $c_1(p)\in(0,\infty)$ depends only on $p$.    
  
  We now proceed with the upper bound. As we have seen, since $z$ generates the $\ell_p$-norm, we have
    $$
      \AvePPP \left( \sum_{i,k=1}^n |x_i c^k_{\pi(i)} d_{\sigma(k)} z_{\eta(k)} |^2 \right)^{1/2} \approx_p \AvePP \left( \sum_{k=1}^n \left( \sum_{i=1}^n | x_i c^k_{\pi(i)} d_{\sigma(k)}|^2 \right)^{p/2} \right)^{1/p}.
    $$
  It follows from Jensen's inequality that
    $$
     \AvePP \left( \sum_{k=1}^n \left( \sum_{i=1}^n |x_i c^k_{\pi(i)} d_{\sigma(k)} |^2 \right)^{p/2} \right)^{1/p} 
     \leq \AvePs \left( \sum_{k=1}^n |d_{\sigma(k)}|^p \AveP \left( \sum_{i=1}^n | x_i c^k_{\pi(i)}|^2 \right)^{p/2} \right)^{1/p}.
    $$
   Using again Lemma \ref{THM JoschaCarsten 1}, we obtain that   
    $$
      \AvePs \left( \sum_{k=1}^n |d_{\sigma(k)}|^p \AveP \left( \sum_{i=1}^n | x_i c^k_{\pi(i)}|^2 \right)^{p/2} \right)^{1/p} 
      \approx_p \norm{\left( \left( \AveP \left( \sum_{i=1}^n |x_i c^k_{\pi(i)}|^2 \right)^{p/2} \right)^{1/p} \right)_{k=1}^n}_{M_d}.
    $$
   An application of Lemma \ref{KOR Groessenordnung Mittel und 2Norm pNorm mix} and the triangle inequality yields 
    \begin{align*}
      & \norm{\left( \left( \AveP \left( \sum_{i=1}^n |x_i c^k_{\pi(i)}|^2 \right)^{p/2} \right)^{1/p} \right)_{k=1}^n}_{M_d} \cr
      &\lesssim \Bigg\|\left( a_k \left( \frac{1}{k} \sum_{i=1}^k x_i^{*p} \right)^{1/p} \right)_{k=1}^n\Bigg\|_{M_d} 
       + \Bigg\|\left( a_k \left( \frac{1}{k} \sum_{i=k+1}^n x_i^{*2} \right)^{1/2} \right)_{k=1}^n\Bigg\|_{M_d}\,.
    \end{align*}
   Therefore, conditions \eqref{Hardy 1} and \eqref{Hardy 2} show that 
   \[
   \AvePPP \left( \sum_{i,k=1}^n |x_i c^k_{\pi(i)} d_{\sigma(k)} z_{\eta(k)} |^2 \right)^{1/2}  \leq c_2(p) \norm{x}_{M_d,a},
   \]
   with constant $c_2(p)\in(0,\infty)$ depending only on $p$. This completes the proof.
\end{proof}

\begin{anm}\label{rem:hady type inequality 1}
Let us note that the first Hardy-type inequality in Theorem \ref{THM combinatorial result} (and Theorem \ref{thm: embedding r-convex}) holds, for instance, if the weight sequence $a$ defining the Orlicz-Lorentz space does not decay too slowly, i.e., if for all $k=1,\dots,n$,
\begin{equation}\label{eq:weight sequence not to slowly decaying}
  \sum_{i=k+1}^n \frac{a_i}{i^{1/p}} \leq C a_k k^{1-1/p},
\end{equation}
where $C\in(0,\infty)$ is an absolute constant. Indeed, using the Hahn-Banach theorem and that the norm of the dual space of $\ell_{M_d}^n$ is up to a factor $2$ equivalent to $\|\cdot\|_{M_d^*}$, we obtain 
    \begin{equation*}
    \norm{\left( a_k \left( \frac{1}{k} \sum_{i=1}^k x_i^{*p} \right)^{1/p} \right)_{k=1}^n}_{M_d} \approx \sup_{y\in B^n_{M^*_d}} \sum_{k=1}^n y_k \frac{a_k}{k^{1/p}} \left(\sum_{i=1}^k x_i^{*p} \right)^{1/p},
    \end{equation*}
    where $B_{M_d^*}^n$ is the unit ball of the space $\ell_{M_d^*}^n$. Since we may assume without loss of generality that $y_1 \geq \ldots \geq y_n$ and because of the Lorentz-norm estimate $\norm{\cdot}_p \leq 2 \norm{\cdot}_{p,1}$, we obtain
    \begin{align*}
      \sup_{y\in B_{M^*}} \sum_{k=1}^n y_k \frac{a_k}{k^{1/p}} \left(\sum_{i=1}^k x_i^{*p} \right)^{1/p} 
      & \leq  2 \sup_{y\in B^n_{M^*_d}} \sum_{k=1}^n y_k \frac{a_k}{k^{1/p}} \sum_{i=1}^k \frac{1}{i^{1-1/p}}x_i^{*} \cr
      & =   2 \sup_{y\in B^n_{M^*_d}} \sum_{i=1}^n \frac{1}{i^{1-1/p}}x_i^{*} \sum_{k=i}^n y_k  \frac{a_k}{k^{1/p}}  \cr
      & \leq  2 \sup_{y\in B^n_{M^*_d}} \sum_{i=1}^n \frac{1}{i^{1-1/p}}x_i^{*} y_i \sum_{k=i}^n \frac{a_k}{k^{1/p}}. 
    \end{align*}
    Applying condition \eqref{eq:weight sequence not to slowly decaying} and again the duality relation used before, we find that
     $$
       \sup_{y\in B^n_{M^*_d}} \sum_{i=1}^n \frac{1}{i^{1-1/p}}x_i^{*} y_i \sum_{k=i}^n \frac{a_k}{k^{1/p}}
       \leq C \sup_{y\in B^n_{M^*_d}} \sum_{i=1}^n \frac{1}{i^{1-1/p}}x_i^{*} y_i i^{1-1/p}a_i
       \approx \norm{\left(x_i^*a_i\right)_{i=1}^n}_{M_d}.
     $$
\end{anm}
    
\section{The embedding of Orlicz-Lorentz spaces into $L_1$}

We are now prepared to present the proof of Theorem \ref{thm: embedding r-convex}, which is based on Theorem \ref{THM combinatorial result}. In fact, we need to prove two things. One is that we can choose the vector $d$ such that it yields an Orlicz function equivalent to the one from Theorem \ref{THM combinatorial result}. The second is that the average over permutations from Theorem \ref{THM combinatorial result} is equivalent to an $L_1$-norm. The latter will be a consequence of Khintchine's inequality.

\begin{proof}[Proof of Theorem \ref{thm: embedding r-convex}]
  We consider $z=\big((n/i)^{1/p}\big)_{i=1}^n$ and choose the sequence $d$ such that, for all $\ell=1,\dots,n$,
  \[
  (M^*)^{-1}\Big(\frac{\ell}{n}\Big) = \frac{1}{n} \sum_{i=1}^\ell d_i.
  \]
  Let $c^1,\dots,c^n\in\R^n$ be the vectors as chosen in Theorem \ref{THM combinatorial result}. Then Theorem \ref{THM combinatorial result} shows that 
      \[
        \AvePPP \left( \sum_{i,k=1}^n |x_i c^k_{\pi(i)} d_{\sigma(k)}z_{\eta(k)}|^2 \right)^{1/2}
        \approx_p \norm{x}_{M_d,a},
      \]
    where $M_d$ is an Orlicz function such that for all $\ell \leq n$
      \[
        (M_d^*)^{-1}\left( \frac{\ell}{n} \right) \approx \frac{1}{n} \sum_{i=1}^{\ell} |d_i|
        +\left( \frac{\ell}{n} \right)^{1/p^*}\left(\frac{1}{n} \sum_{i=\ell+1}^n |d_i|^{p} \right)^{1/p}.
      \]
   We show that $M_d^*$ and $M^*$ are equivalent Orlicz functions. The lower bound is immediate, because of the choice of the sequence $d$ and so we obtain
   \[
   (M_d^*)^{-1}\left( \frac{\ell}{n} \right) \gtrsim \frac{1}{n} \sum_{i=1}^{\ell} |d_i| = (M^*)^{-1}\left( \frac{\ell}{n} \right).
   \]  
   Let us proceed with the upper bound. We have
   \[
    (M_d^*)^{-1}\left( \frac{\ell}{n} \right) \lesssim (M^*)^{-1}\left( \frac{\ell}{n} \right)
     +\left( \frac{\ell}{n} \right)^{1/p^*}\left(\frac{1}{n} \sum_{i=\ell+1}^n |d_i|^{p} \right)^{1/p}
   \]
   and so it remains to estimate the second term on the right-hand side. First, we observe that for all $\ell=1,\dots,n$
   \[
   (M^*)^{-1}\left( \frac{\ell}{n} \right) \geq \frac{\ell}{n}\,d_\ell.
   \]
  Using the relation $t\leq M^{-1}(t)(M^*)^{-1}(t) \leq 2t$ that holds for all $t\geq 0$, we obtain that, for all $\ell=1,\dots,n$,
  \[
  d_\ell \leq \frac{(M^*)^{-1}(\ell/n)}{\ell/n} \leq \frac{2}{M^{-1}(\ell/n)}\,.
  \]
  Hence, for all $\ell=1,\dots,n$, 
  \[
  \frac{1}{n} \sum_{i=\ell+1}^n |d_i|^{p} \leq \frac{2^{p}}{n}\sum_{i=\ell+1}^n \Big|\frac{1}{M^{-1}(i/n)}\Big|^p
  \]
  Consider $p_\epsilon:=p-\epsilon$ for $\epsilon\in(0,p-1)$ such that $M(t)/t^{p-\epsilon}$ is decreasing. Then also $t\mapsto \frac{t}{(M^{-1}(t))^{p_\epsilon}}$ is a decreasing function. In particular, the function
  \[
  t\mapsto \Big|\frac{t}{(M^{-1}(t))^{p_\epsilon}}\Big|^{p/p_\epsilon} = \frac{t^{p/p_\epsilon}}{(M^{-1}(t))^p}
  \]
  is decreasing. Therefore, we obtain the estimate
  \begin{align*}
  \frac{1}{n} \sum_{i=\ell+1}^n |d_i|^{p} & \leq \frac{2^{p}}{n}\sum_{i=\ell+1}^n \frac{(i/n)^{p/p_\epsilon}}{(M^{-1}(i/n))^p}\,\left(\frac{n}{i}\right)^{p/p_\epsilon} 
  \leq \frac{2^{p}}{n} \frac{(\ell/n)^{p/p_\epsilon}}{(M^{-1}(\ell/n))^p}\sum_{i=\ell+1}^n \left(\frac{n}{i}\right)^{p/p_\epsilon} \cr
  & = \frac{2^{p}}{n} \frac{\ell^{p/p_\epsilon}}{(M^{-1}(\ell/n))^p}\sum_{i=\ell+1}^n i^{-p/p_\epsilon}
  \lesssim_{p}  \frac{\ell^{p/p_\epsilon}}{n\,(M^{-1}(\ell/n))^p}\,\ell^{1-p/p_\epsilon} \cr
  & = \frac{\ell/n}{(M^{-1}(\ell/n))^p}\,,
  \end{align*}
  where in the penultimate step we used that $p_\epsilon<p$. Putting everything together, we arrive at 
  \[
  \left( \frac{\ell}{n} \right)^{1/p^*}\left(\frac{1}{n} \sum_{i=\ell+1}^n |d_i|^{p} \right)^{1/p} \lesssim_p \frac{(\ell/n)^{1/p^*}(\ell/n)^{1/p}}{M^{-1}(\ell/n)} = \frac{\ell/n}{M^{-1}(\ell/n)} \leq (M^*)^{-1}\left( \frac{\ell}{n} \right),
  \]
  where in the last step we used the duality relation $t/M^{-1}(t)\leq (M^*)^{-1}(t)$ valid for all $t\geq 0$. This shows that the Orlicz functions $M_d$ and $M$ are equivalent and  we have
    \[
      \AvePPP \left( \sum_{i,k=1}^n |x_i c^k_{\pi(i)} d_{\sigma(k)}z_{\eta(k)}|^2 \right)^{1/2}
      \approx_p \norm{x}_{M,a}.
    \]  
  We now define our embedding into $L_1$ as follows,
    \[
      \Psi_n : \ell^n_{M,a} \to L_1^{n!^32^{2n}},\qquad x \mapsto \left( \sum_{i,k=1}^n x_i c^k_{\pi(i)} d_{\sigma(k)}z_{\eta(k)} \e_i
      \delta_k \right)_{\pi,\sigma,\eta,\e,\delta}.
    \]
  It is a direct consequence of Khintchine's inequality (see, e.g., \cite[Theorem 6.1]{TJ1989}) that
    \[
      \norm{\Psi_n(x)}_1 \approx \norm{x}_{M,a}.
    \] 
   This means that there exists a constant $D\in(0,\infty)$ depending only on $p$ such that for any $n\in\N$ there exists a subspace $Y_n$ of $L_1^{n!^3 2^{2n}}$ with $\dim(Y_n)=n$ such that $\dbm(\ell^n_{M,a},Y_n) \leq D$.    
\end{proof}

Let us close this article with a comment on the special case of embeddings of Lorentz spaces into $L_1$.

\begin{anm}
To obtain from our result the embeddings of certain Lorentz spaces $\dint^n(a,r)$ into $L_1$, one needs to assure that the Hardy-type inequalities are satisfied. In Remark \ref{rem:hady type inequality 1}, we have already demonstrated how the first inequality \eqref{Hardy 1} can be derived for suitably decaying weights $a$. In fact, in the case of Lorentz spaces the second inequality \eqref{Hardy 2} follows from a complementary condition. Both assumptions together assure that the decay of $a$ is regular enough. Let us consider the Orlicz function $M(t)=t^r$ with $1<r<p$ and assume that for all $k=1,\dots,n$ the sequence $a$ of weights satisfies
\begin{equation}\label{eq:weight sequence not too fastly decaying}
\sum_{i=1}^k \frac{a_i^r}{i^{p/2}} \leq C a_k^r k^{1-p/2}\,
\end{equation}
where $C\in(0,\infty)$ is an absolute constant.  Then, because of the inequality $\|\cdot\|_{2} \leq 2^{1/r} \|\cdot\|_{2,r}$, we obtain from interchanging the order of summation that
\begin{align*}
\bigg(\sum_{k=1}^n a_k^r\bigg( \frac{1}{k}\sum_{i=k+1}^nx_i^{*2}\bigg)^{r/2}\, \bigg)^{1/r} 
& \leq  \bigg(\sum_{k=1}^n \frac{a_k^r}{k^{r/2}} \|(x_i^*)_{i=k+1}^n\|_{2,r}^r\bigg)^{1/r} \cr
& = \bigg(\sum_{k=1}^n \frac{a_k^r}{k^{r/2}} \sum_{i=k+1}^n i^{r/2-1}x_i^{*r}\bigg)^{1/r} \cr
& = \bigg(\sum_{i=1}^n i^{r/2-1}x_i^{*r} \sum_{k=1}^i \frac{a_i^r}{i^{r/2}} \bigg)^{1/r} \cr
& \leq C^{1/r} \|x\|_{\dint^n(a,r)},
\end{align*}
where we used condition \eqref{eq:weight sequence not too fastly decaying} in the last step of the computation.
\end{anm}

It would be nice to obtain complete characterizations describing exactly which Orlicz-Lorentz spaces embed uniformly into $L_1$. As already the result in this paper and previous ones like \cite{key-K-S1}, \cite{PS2012}, \cite{Sch1995}, or \cite{key-Sch2} show, this question is one of considerable difficulty. One main problem is that in general Orlicz functions are not homogeneous for some parameter $\alpha$, i.e., $M(\lambda t)\neq \lambda^\alpha M(t)$. 

\subsection*{Acknowledgement}

JP has been supported by a \textit{Visiting International Professor Fellowship} from the Ruhr University Bochum and its Research School PLUS as well as by the Austrian Science Fund (FWF) Project F5508-N26, which is part of the Special Research Program ``Quasi-Monte Carlo Methods: Theory and Applications''.

\bibliographystyle{plain}
\bibliography{orlicz_lorentz}

\noindent{\bf Joscha Prochno}\\
Institute of Mathematics \& Scientific Computing\\
University of Graz\\
Heinrichstra\ss e 36\\
8010 Graz, Austria\\
e-mail: {\em joscha.prochno@uni-graz.at}\\

\end{document}